
\documentclass[12pt]{amsart}

\usepackage{amsfonts,amsmath,amssymb,amsthm}
\usepackage[latin1]{inputenc}
\usepackage{tikz}
\usetikzlibrary{calc}
\usetikzlibrary{plotmarks}
\usepackage[all]{xy}
\usepackage{float}

\newcommand{\Cc}{\mathbb{C}} 

\newcommand{\Rr}{\mathbb{R}}

\newcommand{\Kk}{\mathbb{K}}

\renewcommand {\le}{\leqslant}
\renewcommand {\ge}{\geqslant}

\newcommand{\mymax}{\text{max}}

\newcommand{\defi}[1]{\emph{#1}}

\newcommand{\band}{\mathbb{D}}
\newcommand{\ball}{\mathbb{B}}

{\theoremstyle{plain}
\newtheorem{theorem}{Theorem}    
\newtheorem*{theorem*}{Theorem}
\newtheorem{lemma}[theorem]{Lemma}       
\newtheorem{proposition}[theorem]{Proposition}      
\newtheorem{proposition*}{Proposition} 
\newtheorem{corollary}[theorem]{Corollary}      
\newtheorem*{theoremA*}{Theorem A}
\newtheorem*{theoremAA*}{Theorem A'}
\newtheorem*{theoremB*}{Theorem B}

}
{\theoremstyle{remark}

\newtheorem*{remark*}{Remark}  
\newtheorem{remark}[theorem]{Remark}   

\newtheorem{question}{Question}
}

\setlength{\parindent}{0pt}

\title{Realization of intermediate links \\ of line arrangements}

\date{\today}

\author{Arnaud Bodin}
\email{Arnaud.Bodin@math.univ-lille1.fr}

\address{Laboratoire Paul Painlev\'e, Math\'ematiques, Universit\'e 
Lille 1, 59655 Villeneuve d'Ascq Cedex, France}

\subjclass[2010]{32S22 (14N20, 14N10, 57M25)}

\keywords{Arrangement of lines, knots and links, configurations of lines}

\begin{document}

\begin{abstract}
We investigate several topological and combinatorial properties of line arrangements.
We associate to a line arrangement a link $\mathcal{A} \cap S^3_r(0)$ obtained by intersecting 
the arrangement with some sphere. Several topics are discussed: (a) some link configurations can be realized 
by complex line arrangements but not by real line arrangements; (b) if we intersect the arrangements with a 
vertical band instead of a sphere, what link configurations can be obtained? (c) relations between link configurations
obtained by bands and spheres.
\end{abstract}

\maketitle


\section*{Introduction}

The topic of this paper is the study of intermediate links.
To an algebraic curve $(f(x,y)=0)$ passing through the origin we classically associate
a link $(f=0) \cap S^3_\epsilon(0)$, which is independent of $\epsilon$ for all sufficiently small
$\epsilon > 0$ (see Milnor \cite{Mi}). Another well-known and studied situation are \defi{links at infinity} when
we consider the intersection with a sphere $ S^3_R(0)$ of radius $R\gg 1$ sufficiently large.
An \defi{intermediate link} is the intersection $(f=0) \cap S^3_r(0)$, with an arbitrary $r>0$.
There is no much literature on that subject initiated by L.~Rudolph (see the surveys \cite{Ru3}, \cite{BF}
and also \cite{Ru2}, \cite{BO}, \cite{Bor}).

\medskip

We will extend and compare several concepts of intermediate links in the case of line arrangements.
\begin{itemize}
  \item We compare the configurations obtained by intersecting a complex line arrangement with a ball of $\Cc^2$
and the configurations obtained by intersecting a real line arrangement with a ball of $\Rr^2$.
  \item We compare the configurations obtained by intersecting  a real line arrangement with a ball of $\Rr^2$
and the configurations obtained by intersecting with a band of type $[-r,+r]\times \Rr$ in $\Rr^2$.
\end{itemize}

To be more precise we define the intersection graph of an arrangement in a set.
Let $\mathcal{A}$ be a real or complex line arrangement and let $B$ be a set (which will either be a ball $B^4_r(0)$ in $\Cc^2$; 
a disk $D^2_r(0)$ in $\Rr^2$; or a band $[-r,+r]\times \Rr$ in $\Rr^2$).
The \defi{intersection graph} of $\mathcal{A}$ in $B$ is the graph defined by:
\begin{itemize}
  \item one vertex associated to one line;
  \item one edge connect two vertices if the corresponding lines have their intersection inside $B$.
\end{itemize}

\begin{theorem}\ \\
\begin{minipage}{0.79\textwidth}
 This graph can be realized as the intersection graph of some complex line arrangement with a ball,
but cannot be realized as the intersection graph of a real line arrangement with a disk.
\end{minipage}
\begin{minipage}{0.20\textwidth}
\hspace*{1em}
\begin{tikzpicture}
      \coordinate (O) at (0,0);      
      \fill (O) circle (2pt);
      \foreach \i in {0,1,...,4} {
        \coordinate (A\i) at (\i*72:1); 
        \fill (A\i) circle (2pt);  
        \draw (O)--(A\i);    
      };
      \draw (A0)--(A1)--(A2)--(A3)--(A4)--cycle;
   \end{tikzpicture}   
\end{minipage}
\end{theorem}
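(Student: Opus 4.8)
The graph pictured is the wheel $W_5$: a hub vertex (the line $O$) adjacent to all five rim vertices $A_0,\dots,A_4$, which in turn form a $5$-cycle $C_5$. I would split the statement into two independent tasks: (i) produce a complex arrangement and a ball realizing $W_5$; (ii) prove no real arrangement and disk can realize it. For the real case the right language is that of chords: since every edge incident to $O$ is present, each $A_i$ meets $O$ inside the disk $D$, so all six lines are genuine chords of the boundary circle $\partial D$, contributing twelve endpoints. The key elementary fact I would invoke is that two chords meet inside $D$ \emph{if and only if} their endpoints alternate around $\partial D$.

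The plan for (ii) is then to extract a combinatorial obstruction. The chord $O$ splits $\partial D$ into two arcs $T$ and $B$; because each $A_i$ crosses $O$, it has exactly one endpoint $t_i\in T$ and one $b_i\in B$. Writing $p_i$ for the rank of $t_i$ along $T$ and $q_i$ for the rank of $b_i$ along $B$, a short case check on the alternation criterion shows that $A_i$ and $A_j$ meet inside $D$ exactly when $(p_i-p_j)(q_i-q_j)>0$, i.e.\ when the pairs are \emph{concordant}. Thus the relation ``$A_i\preceq A_j$ iff $p_i\le p_j$ and $q_i\le q_j$'' is a partial order whose comparability graph is precisely the crossing graph of $A_0,\dots,A_4$; in particular that crossing graph admits a transitive orientation. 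But the crossing graph we need is $C_5$, and an odd cycle has no transitive orientation (orienting $A_{i-1}\to A_i$ forces, by the absence of the chord $A_{i-1}A_{i+1}$, an alternation of directions around an odd cycle, which is impossible). This contradiction rules out any real realization. I expect this direction to be the main obstacle, the crux being the clean reduction of the configuration to a comparability (equivalently, permutation) graph.

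For (i) I would build an explicit arrangement exploiting that the alternation criterion has no analogue over $\Cc$. Take the five diagonals of a regular pentagon inscribed in the unit circle as the real lines $A_0,\dots,A_4$: consecutive diagonals cross at the five vertices of the inner pentagon (radius $<1$), while non-consecutive diagonals meet only at the outer pentagon vertices (radius $1$), so their crossing graph \emph{inside a small central disk} is exactly $C_5$. Each diagonal has equation $n_i\cdot z=c$ with $n_i=(\cos\theta_i,\sin\theta_i)$ and common distance $c=\cos(2\pi/5)$. For the hub I would take the isotropic complex line $O=\{t(1,i)/\sqrt2:t\in\Cc\}$ through the origin. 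Here the decisive computation is that for every real unit vector $n=(\cos\theta,\sin\theta)$ one has $n\cdot(1,i)/\sqrt2=e^{i\theta}/\sqrt2$, of modulus $1/\sqrt2$ \emph{independent of $\theta$}; hence $O$ meets each $A_i$ at the point $z_i=\tfrac{c}{\,n_i\cdot d\,}d$ of norm $c\sqrt2=\sqrt2\cos(2\pi/5)<1$. This is precisely the uniformity the real obstruction forbids.

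It remains to fix the radius. All five points $z_i$ and all five inner-pentagon crossings lie at radius $<1$, whereas the five outer vertices lie at radius $1$, so choosing any $r$ strictly between $\max\{\sqrt2\cos(2\pi/5),\ \rho_{\mathrm{inner}}\}$ and $1$ places every edge of $W_5$ inside $B^4_r(0)$ and every non-edge outside. I would finally check genericity: since $O\cap\Rr^2=\{0\}$, the line $O$ avoids all (real) pentagon crossings, the points $z_i$ are genuinely non-real and distinct, and no three diagonals of a regular pentagon concur, so there are no spurious triple points affecting the graph; all the relevant conditions being open, the configuration is robust. Hence the intersection graph with this ball is exactly $W_5$, completing the complex realization.
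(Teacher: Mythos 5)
Your proposal is correct, and on both halves it takes a genuinely different route from the paper. For the non-realizability over $\Rr$, the paper relies on a Menelaus theorem for polygons (due to Popescu-Pampu): the product of the five signed ratios equals $1$, hence a line can cross only an even number of sides of a closed pentagon; it then classifies the five rim lines into types (A)/(B) according to whether the disk separates their two outer intersection points, proves that type (B) lines occur in pairs, and concludes that the sixth line would have to cross an odd number of sides, a contradiction. You instead exploit that the five rim lines are chords all crossing the common hub chord $O$: ranking their endpoints along the two arcs of $\partial D$ cut off by $O$ identifies their crossing graph with the comparability graph of a two-dimensional order (a permutation graph), and $C_5$ admits no transitive orientation. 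Your obstruction is more combinatorial and arguably more robust: it needs no convexity lemma and no pairing argument, and it immediately excludes every wheel whose rim is not a permutation graph, whereas the paper's parity argument is tailored to the pentagon. For the realization over $\Cc$, the paper takes the pentagon with complex vertices $P_i=(\omega^i,\omega^{-i})$, $\omega=e^{2\mathrm{i}\pi/5}$ (so that all ``outer'' intersections sit at norm $\sqrt2$), together with a sixth real line, and verifies the graph by computing the intersections; you keep the five real diagonals of a regular pentagon and take the isotropic line $\Cc\cdot(1,\mathrm{i})$ as hub, so that $|n\cdot(1,\mathrm{i})|$ is independent of the real unit normal $n$ and the hub meets every diagonal at norm $\sqrt2\cos(2\pi/5)<1$. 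Both constructions are explicit; yours reduces the verification to a one-line computation and makes transparent exactly which real phenomenon (uniformity of the crossing distances) has no analogue over $\Rr$.

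One detail you should add to make the real half airtight: the rank dictionary presupposes that the twelve chord endpoints are pairwise distinct and that no intersection point of the arrangement lies exactly on $\partial D$ (the disk in the paper is closed, so an edge only requires the intersection to lie in $D$; two chords meeting on $\partial D$ share an endpoint and the concordance criterion degenerates). Since the arrangement has finitely many intersection points, replace $D$ by a concentric disk of slightly larger radius containing no additional intersection points; then every former boundary intersection becomes interior, every line through $D$ is an honest chord with endpoints distinct from all others, and your argument applies verbatim.
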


Fix a realizable graph $G$: we can realize it by a real line arrangement $\mathcal{A}$
inside a real disk $D^2_1(0)$ of radius fixed to $1$. We define the \defi{maximal radius} $R_\mymax$
to be the maximal $r\ge 1$ such that there is no intersection point in $D^2_r(0) \setminus D^2_1(0)$.
In other words the intersection points not in $D^2_1(0)$ are as far as possible.

We also defined a maximal radius for bands (instead of disks).
The second and third parts are devoted to a numerical algorithm to compute this maximal number 
and --among other things-- prove the following results:
\begin{theorem}\ 
\begin{itemize}
  \item The maximal radius for the band problem is an algebraic number;
  \item The maximal radius for the band problem is less or equal than the maximal radius for
the disk problem.
\end{itemize}
\end{theorem}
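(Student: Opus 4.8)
The plan is to treat both statements as consequences of the semi-algebraic nature of the optimization problem, together with an elementary comparison between the band-distance and the Euclidean distance. Write each line as $L_k\colon a_kx+b_ky=c_k$, so that a realization of $G$ by $n$ lines is a point $\mathbf{a}=(a_k,b_k,c_k)_{1\le k\le n}\in\Rr^{3n}$. For a non-parallel pair the intersection point $p_{ij}=(x_{ij},y_{ij})$ has coordinates that are rational in $\mathbf{a}$ (Cramer: $x_{ij}=(b_ic_j-b_jc_i)/(a_ib_j-a_jb_i)$, and similarly for $y_{ij}$). In the band model the relevant distance of $p_{ij}$ to the origin is $|x_{ij}|$, while in the disk model it is $\|p_{ij}\|$; the trivial inequality
\[
  \|p_{ij}\|\;\ge\;|x_{ij}|
\]
is what will drive the second statement.

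For the first statement (algebraicity) I would observe that the condition "$\mathbf{a}$ realizes $G$ inside $[-1,+1]\times\Rr$" is the conjunction of $|x_{ij}|\le 1$ over the edges and $|x_{ij}|>1$ (or $L_i\parallel L_j$) over the non-edges; clearing the denominators $a_ib_j-a_jb_i$ turns these into polynomial (in)equalities, so the set of valid parameters is semi-algebraic. Likewise, for $t\ge 1$ the statement "$\mathbf{a}$ is valid and every outside intersection point has $|x_{ij}|\ge t$" is semi-algebraic in $(\mathbf{a},t)$. Eliminating $\mathbf{a}$ by Tarski--Seidenberg, the set
\[
  \Sigma=\{\,t\ge 1:\exists\,\mathbf{a}\ \text{valid with}\ |x_{ij}|\ge t\ \text{for all outside}\ ij\,\}
\]
is a semi-algebraic subset of $\Rr$, hence a finite union of points and intervals with algebraic endpoints. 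Since the band maximal radius equals $\sup\Sigma$, it is an algebraic number (once finiteness is known, see below).

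For the second statement (band $\le$ disk) I would transform any achievable band-radius into an almost-as-large disk-radius. Fix a valid arrangement $\mathcal{A}$ whose nearest outside point lies at band-distance $r=\min_{ij}|x_{ij}|$, and apply the diagonal affine map $\Phi(x,y)=((1-\eta)x,\lambda y)$ with small $\eta,\lambda>0$. This sends lines to lines, preserves the combinatorial type, and multiplies every $x$-coordinate by $1-\eta$. Every inside point then has $x$-coordinate of modulus $\le 1-\eta$, so choosing $\lambda$ small (depending on the finitely many $y_{ij}$) forces $\|\Phi(p_{ij})\|^2=(1-\eta)^2x_{ij}^2+\lambda^2y_{ij}^2\le 1$; hence $\Phi(\mathcal{A})$ realizes $G$ inside $D^2_1(0)$. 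For an outside point the bound $\|\Phi(p_{ij})\|\ge(1-\eta)|x_{ij}|\ge(1-\eta)r$ shows its Euclidean distance is at least $(1-\eta)r$, and (since $r>1$ in the nontrivial case) one may take $\eta$ so small that $(1-\eta)r>1$. Thus $\Phi(\mathcal{A})$ is disk-valid with disk-radius $\ge(1-\eta)r$, so $R_{\mymax}\ge(1-\eta)r$; letting $\eta\to0$ and taking the supremum over $\mathcal{A}$ gives the desired $R_{\mymax}\ge$ (band maximal radius).

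The geometric comparison is easy once set up, and I expect two delicate points. The minor one is the boundary normalization: an inside point with $|x_{ij}|=1$ cannot be pushed strictly inside $D^2_1(0)$ by $y$-compression alone, which is exactly why I insert the auxiliary contraction factor $1-\eta$ and let $\eta\to0$; one must also check that $\Phi$ never drags an outside point to distance $\le 1$ nor collapses the combinatorial type, both of which hold because $\Phi$ is a linear isomorphism with nonzero diagonal. The serious obstacle is finiteness of the band maximal radius, which the algebraicity argument presupposes: one must exclude the possibility that all outside points are simultaneously pushed to infinity (for instance by realizing the relevant non-edges with parallel lines), equivalently that $\Sigma$ is unbounded above. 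Establishing this boundedness is where I expect the real work to lie, and it is presumably the role of the explicit numerical algorithm alluded to above, which locates the optimum as a critical configuration with enough active constraints to pin it down as a root of a polynomial system.
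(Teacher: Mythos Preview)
Your proposal is correct and follows essentially the same approach as the paper. For algebraicity, the paper parametrizes lines by two coefficients $(a_i,c_i)$ (excluding vertical lines), defines the semi-algebraic feasible set $\mathcal{S}\subset\Rr^{2n}$, composes the rational map to the abscissas $x_{ij}$ with the min over non-edges, and invokes general semi-algebraic facts to conclude that $\sup_{\mathcal{S}}F$ is algebraic when finite; your version with three parameters per line and an explicit appeal to Tarski--Seidenberg on the set $\Sigma$ is the same argument written out more carefully. For the comparison, the paper first flattens by $(x,y)\mapsto(x,\lambda y)$ and then applies the homothety $(x,y)\mapsto(1-\epsilon)(x,y)$, whereas you combine both into the single diagonal map $\Phi(x,y)=((1-\eta)x,\lambda y)$; the verification that inside points land in $\ball_1$ and outside points stay beyond $(1-\eta)r$ is identical. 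Your explicit treatment of the boundary case $|x_{ij}|=1$ via the $\eta$-contraction, and your flagging of the finiteness hypothesis (which the paper also leaves as an assumption in its Proposition), are both appropriate.
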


\medskip

\emph{Acknowledgements:} I thank Patrick Popescu-Pampu for its help in the 
proof that some real configurations of lines cannot occur by using a nice extended version 
of Menelaus theorem (see part \ref{part:realvscomplex}). 
I also thank the referee for useful comments.


\bigskip
\bigskip

\part{Real and complex intermediate links of arrangements}
\label{part:realvscomplex}

Let $\Kk = \Rr$ or $\Cc$.
An \defi{arrangement} in $\Kk^n$ is a finite collection
of lines $\mathcal{A}=\{ L_i\}$.
In this part we intersect a real or complex line arrangement 
with a sphere $S^3_r(0)$ of arbitrary radius $r$.
We will detail a configuration feasible with complex lines but not feasible with real lines.

\section{The problems}
\label{sec:complexpb}

To a line arrangement $\mathcal{A}$ and a radius $r$
we associate its link $\mathcal{A} \cap S^3_r(0)$.
We can also associate an \emph{intersection graph} $G_{\mathcal{A} \cap \ball_r}$ as follows:
a vertex is associated to each line, and two vertices are connected by one edge if and only if the corresponding lines 
have intersection inside $\ball_r = \big\{ (x,y)\in \Cc^2 \mid |x|^2 + |y|^2 \le r^2 \big\}$.
In other words $G_{\mathcal{A} \cap \ball_r}$ is build as follows: 
a vertex for each knot, an edge between two linked knots.

For a given graph $G$ is such a configuration of lines exists? In other words:
\begin{question}
Can any graph $G$ (such that between any two vertices there is no or one edge)
can be realized as the link of an arrangement?  
\end{question}

One clue that it could be true is the following.
\begin{lemma}
\begin{enumerate}
  \item Any graph $G$ (such that between any two vertices there is no or one edge) can be realized 
as a quasipositive link.
  \item We may moreover suppose that each component of the link is a trivial knot and any pair of 
knots make a trivial link or a positive Hopf link. 
  \item This link is the intersection of $S^3_r(0)$ with some complex curve $f(x,y)=0$.
\end{enumerate}
\end{lemma}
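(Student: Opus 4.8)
The plan is to prove part (1) by writing down an explicit quasipositive braid whose closure realizes $G$, to observe that this construction produces the refined link described in part (2) automatically, and to obtain part (3) by invoking Rudolph's realization theorem.

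Let the vertices of $G$ be $1,\dots,n$ and let $E$ be its edge set; I would work in the braid group $B_n$ on $n$ strands. For $1\le s<t\le n$ write the band generator $a_{s,t}=w_{s,t}\,\sigma_s\,w_{s,t}^{-1}$ with $w_{s,t}=\sigma_{t-1}\sigma_{t-2}\cdots\sigma_{s+1}$, a conjugate of a standard positive generator, and set
$$\beta=\prod_{\{s,t\}\in E} a_{s,t}^{\,2}=\prod_{\{s,t\}\in E} w_{s,t}\,\sigma_s^{\,2}\,w_{s,t}^{-1}.$$
Since each factor $a_{s,t}^{\,2}$ is a product of two conjugates $w_{s,t}\sigma_s w_{s,t}^{-1}$ of positive generators, $\beta$ is by definition a quasipositive braid, so its closure $\widehat\beta$ is a quasipositive link; this is the candidate for (1).

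Next I would verify that $\widehat\beta$ is exactly the link attached to $G$. Each $a_{s,t}$ induces the transposition $(s\;t)$, so every $a_{s,t}^{\,2}$ is a pure braid and $\beta$ is pure; hence $\widehat\beta$ has one component $K_k$ per strand. For the linking numbers, note that $\mathrm{lk}(K_s,K_t)$ is half the signed number of crossings between the two strands: the band $a_{s,t}^{\,2}$ contributes exactly two positive crossings, while for every other edge the crossings coming from the conjugating word $w_{p,q}$ are cancelled in pairs by those of $w_{p,q}^{-1}$ (the intervening $\sigma_p^{\,2}$ being a pure braid), so they contribute nothing. Thus $\mathrm{lk}(K_s,K_t)=1$ when $\{s,t\}\in E$ and $0$ otherwise. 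Finally, realizing each $a_{s,t}^{\,2}$ as a small, localized positive clasp between two otherwise disjoint round unknots exhibits $\widehat\beta$, up to isotopy, as $n$ trivial knots clasped positively exactly along the edges of $G$: each component is unknotted, an edge pair is a positive Hopf link, and a non-edge pair splits. This is precisely statement (2), and (2) refines (1).

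The main obstacle is this last verification: linking numbers alone do not force a non-edge pair to be an honest split link (compare the Whitehead link), so I must argue genuinely that the bands can be positioned as pairwise non-interfering clasps and that no strand gets knotted by the conjugating words. The cleaning of the conjugators --- choosing each $w_{s,t}$ so that the band for $\{s,t\}$ passes over the intervening strands and is undone by $w_{s,t}^{-1}$ --- is exactly what makes each clasp local, and a careful isotopy argument (or an induction on $|E|$, adding one clasp at a time between already-unknotted, suitably unlinked components) is what turns the crossing count into the stated geometric link type. Part (3) is then immediate from (1) together with Rudolph's theorem (see the surveys \cite{Ru3}, \cite{BF}): the links arising as transverse intersections $\big(f(x,y)=0\big)\cap S^3_r(0)$ of a complex curve with a sphere are precisely the quasipositive links. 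Since $\widehat\beta$ is quasipositive, there is a complex curve with $\widehat\beta=\big(f=0\big)\cap S^3_r(0)$. I would stress that this $f$ need not be a product of linear forms, so the lemma yields only a complex curve and not yet the line arrangement sought in the Theorem.
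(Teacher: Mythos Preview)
Your approach is essentially the same as the paper's: both construct the link as the closure of a quasipositive braid built from factors of type $w\sigma_i^{\,2}w^{-1}$, one per edge of $G$, and then invoke Rudolph's theorem \cite{Ru} for part~(3). The paper phrases it as an induction on the number of components (adding one strand and the appropriate $w\sigma_i^{\,2}w^{-1}$ words at each step), whereas you write down the full product $\prod_{\{s,t\}\in E} w_{s,t}\sigma_s^{\,2}w_{s,t}^{-1}$ at once and then verify linking numbers and link types more carefully; your caution about the Whitehead-link phenomenon and the local-clasp isotopy is a genuine refinement over the paper's terse argument, which simply asserts the construction works.
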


\begin{proof}
We use Rudolph's theory of quasipositive link.
By induction on the number of components: when adding a trivial link
either it is unlinked with the other components so the braid word is unchanged,
either it is linked with one (or more) component: it corresponds to the addition of a word (or several)
of type: $w \sigma_i^2 w^{-1}$.
Then, by a deep result of \cite{Ru}, any quasipositive link is a transversal $\Cc$-link.
\end{proof}

\section{An example and a counter-example}
\label{sec:example}

The following example is quite interesting.
Let $G_6$ be the following graph:
\begin{figure}[H]
   \begin{tikzpicture}
      \coordinate (O) at (0,0);      
      \fill (O) circle (2pt);
      \foreach \i in {0,1,...,4} {
        \coordinate (A\i) at (\i*72:1); 
        \fill (A\i) circle (2pt);  
        \draw (O)--(A\i);    
      };
      \draw (A0)--(A1)--(A2)--(A3)--(A4)--cycle;
   \end{tikzpicture}  
\end{figure}

\begin{proposition}
The graph $G_6$ can be realized as the intersection graph of some complex line arrangement 
but cannot be realized as the intersection graph of a real line arrangement.
\end{proposition}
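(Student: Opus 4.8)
The plan is to treat the two assertions separately, proving the complex realization by an explicit construction and the real non-realization by a Menelaus-type obstruction. Throughout, label the six lines so that $\ell$ is the line attached to the central vertex $O$ and $m_0,\dots,m_4$ are attached to $A_0,\dots,A_4$; the edges of $G_6$ then require $\ell\cap m_i$ and $m_i\cap m_{i+1}$ to lie inside a ball $\ball_r$, while the non-edges require $m_i\cap m_{i+2}$ to lie outside it (indices mod $5$).

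For the complex realization, I would take $m_i:\ y=a_ix+b_i$ with slopes $a_i=\omega^{2i}$, where $\omega=e^{2\pi\sqrt{-1}/5}$, together with small generic intercepts $b_i$. The first coordinate of $m_i\cap m_j$ is $(b_j-b_i)/(a_i-a_j)$, and the point is that $|a_i-a_{i+1}|=2\sin(2\pi/5)$ is strictly larger than $|a_i-a_{i+2}|=2\sin(\pi/5)$; hence, for comparable numerators, the consecutive intersection points sit closer to the origin than the diagonal ones. Choosing the $b_i$ small, a transversal $\ell$ through a neighbourhood of the origin, and a radius $r$ between these two scales realizes all ten edges inside $\ball_r$ and pushes the five diagonal points outside. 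The only thing to verify carefully is that passing from the first coordinate to the genuine Hermitian norm on $\Cc^2$ preserves the separation, which holds for $b_i$ small enough; I would record this as a short estimate. The conceptual content is that $\omega^0,\omega^2,\dots,\omega^8$ run through the fifth roots of unity in the pentagram order, so that \emph{consecutive} slopes are far apart and \emph{distance-two} slopes are close -- a cyclic pattern with no analogue on the real line.

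For the real non-realization, suppose toward a contradiction that a real arrangement and a disk $D$ realize $G_6$. First I would analyse the rim alone: the five points $Q_i=m_i\cap m_{i+1}$ lie in $D$ and the five points $R_i=m_i\cap m_{i+2}$ lie outside $D$, and since $D$ is convex this forces the $m_i$ to bound a convex pentagon with vertices $Q_0,\dots,Q_4$, the $R_i$ being the five tips of the associated pentagram, cut off by $\partial D$. The spoke edges then say that $\ell$ meets each side-line $m_i$ at a point $P_i\in D$. To the pentagon $Q_0\cdots Q_4$ and the transversal $\ell$ I would apply the extended (polygonal) Menelaus theorem, giving
\[
\prod_{i=0}^{4}\frac{\overline{Q_{i-1}P_i}}{\overline{P_iQ_i}}=(-1)^5=-1 .
\]

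The main obstacle, and the heart of the argument, is to convert this identity together with the convexity of $D$ into a contradiction. A pure sign count does \emph{not} suffice: a transversal meets a convex pentagon's boundary in at most two sides, so generically two of the ratios are positive and three negative and the product is already $-1$. The real content is quantitative. Because each star tip $R_i$ is excluded from $D$ while the adjacent vertices are contained in it, the part of each side-line $m_i$ lying in $D$ is squeezed between the two tips carried by that line, and $P_i$ must lie in this squeezed segment for all $i$ simultaneously. I would use the extended Menelaus relation to show that no single line can meet all five of these segments at once inside a convex region separating the vertices from the tips -- in effect, the cyclic pattern ``consecutive far, diagonal near'' that made the complex construction possible is exactly what a real ordering cannot sustain. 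Making this last step precise, i.e.\ extracting the sharp inequality on the five ratios from the inside/outside pattern, is where the bulk of the work lies and is the role played by Popescu-Pampu's sharpened Menelaus theorem.
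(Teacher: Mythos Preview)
Your complex construction is different from the paper's explicit six-line example (the paper gives concrete points $P_i=(\omega^i,\omega^{5-i})$ and a specific sixth line $6x-4y=1$), but your pentagram-slope idea is sound and, once the final norm estimate is written out, would work. So the first half is fine.

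The real non-realization, however, has a genuine gap, and you have in fact already noticed it yourself. You apply the polygonal Menelaus relation to the \emph{inner} convex pentagon $Q_0\cdots Q_4$ (the consecutive intersections inside $D$). As you correctly observe, a transversal meets a convex pentagon in zero or two sides, so the sign pattern of the five ratios is automatically consistent with the Menelaus identity and no contradiction follows. At that point your argument stops; the appeal to a ``sharpened'' quantitative Menelaus is not a proof, and this is not what the paper does.

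The missing idea is to apply Menelaus to the \emph{outer} pentagon whose vertices are your diagonal points $R_i=m_i\cap m_{i+2}$ (the points outside $D$), with edges lying on the arrangement lines $m_i$ themselves. Then the sixth line $\ell$ meets each edge-line $m_i$ at a point inside $D$, and the question becomes purely a sign question: does this point lie on the segment $[R_j,R_{j'}]$ or not? That depends on the order of the four marked points $R_j,R_{j'},Q_k,Q_{k'}$ along $m_i$: call the line Type~(A) if the order is $R-Q-Q-R$ (the two $R$'s separated by $D$) and Type~(B) if it is $R-R-Q-Q$. For Type~(A) the intersection with $\ell$ lies in the segment, for Type~(B) it lies outside. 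The crucial combinatorial step, which your proposal lacks, is a local incidence argument showing that Type~(B) lines occur in pairs: if $m_i$ is Type~(B) with its farthest $R$-point at, say, $R_j$, then the other arrangement line through $R_j$ is forced to be Type~(B) as well (otherwise one of the $R$'s would be trapped inside the convex hull of the $Q$'s, hence inside $D$). Thus an even number of the five edges are Type~(B), an odd number are Type~(A), and $\ell$ crosses an odd number of segments of the outer pentagon --- which the sign corollary of Menelaus forbids. No quantitative refinement is needed; the argument is pure parity, but only after choosing the right pentagon and carrying out the Type~(A)/(B) pairing.
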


\begin{proof}
\textbf{Realization as a complex line arrangement.}
Let $\omega = \exp(\frac{2\mathrm{i}\pi}{5})$. Set $P_0=(1,1)$ and $P_i = (\omega^i,\omega^{5-i})$.
Consider the $5$-lines arrangement 
$\mathcal{A}$ composed of $(P_0 P_2)$,  $(P_2 P_4)$,  $(P_4 P_1)$,  $(P_1 P_3)$,  $(P_3 P_0)$.
Finally define the sixth line $L$ of equation $(6x-4y=1)$.

\begin{figure}[H]
   \begin{tikzpicture}[scale=2]
      \coordinate (O) at (0,0);      

      \foreach \i in {0,1,...,4} {
        \coordinate (P\i) at (\i*72:1); 
        \fill (P\i) circle (2pt);
        \node at (P\i)[left] {$P_\i$};
      };
      \draw (P0)--(P2)--(P4)--(P1)--(P3)--cycle;
        
   \end{tikzpicture}   
\end{figure}

Let the $6$-lines arrangement $\mathcal{A}'=\mathcal{A} \cup L$.
It has the following picture: all bold curves are lines, including the bold circle!
The thin circle is the sphere.

\begin{figure}[H]
   \begin{tikzpicture}[scale=2]
      \coordinate (O) at (0,0);      

      \foreach \i in {0,1,...,4} {
        \coordinate (P\i) at (\i*72:1); 
      };
      \draw[very thick] (P0)--(P2)--(P4)--(P1)--(P3)--cycle;
      \draw[very thick] circle (0.3);        
      \draw[thin] circle (0.7); 
   \end{tikzpicture} 
\end{figure}

\textbf{Fact:} $\mathcal{A}'$ has intersection graph $G_6$.
The proof is just a computation of the intersection points.

\bigskip

\textbf{Menelaus theorem for polygons.} 

One of the key-point, due to Patrick Popescu-Pampu, is a Menelaus theorem for polygons in the real plane.
The statement is given here for a pentagon, the proof for all polygons is the same as the one for triangles.
Let a line $L$ that intersects the edge lines of a pentagon $P_1,\ldots, P_5$ at points $Q_1,\ldots, Q_5$:
$Q_i$ is the intersection of $L$ with the line $(P_i P_{i+1})$.

\begin{figure}[H]
   \begin{tikzpicture}[scale=1]
      \coordinate (P1) at (0,-2);      
      \coordinate (P2) at (1,3); 
      \coordinate (P3) at (2,-2);      
      \coordinate (P4) at (3,2);
      \coordinate (P5) at (4,-0.5);      

      \draw[thick] (P1)--(P2)--(P3)--(P4)--(P5)--cycle;
      \draw (P5)-- +(2,0.75);

      \coordinate (A) at (-1,0);      
      \coordinate (B) at (6,0); 
      \draw[thick] (A)--(B);

     \foreach \i in {1,2,...,5} {
        \fill (P\i) circle (2pt);
        \node at (P\i)[left] {$P_\i$};
      };

      \coordinate (Q1) at (intersection of A--B and P1--P2);
      \coordinate (Q2) at (intersection of A--B and P2--P3);
      \coordinate (Q3) at (intersection of A--B and P3--P4);
      \coordinate (Q4) at (intersection of A--B and P4--P5);
      \coordinate (Q5) at (intersection of A--B and P5--P1);

     \foreach \i in {1,2,...,5} {
        \fill (Q\i) circle (2pt);
        \node at (Q\i)[above] {$Q_\i$};
      };
        \node at (A)[above right] {$L$};
   \end{tikzpicture} 
\end{figure}

\begin{theorem}[Menelaus theorem for polygons]
$$ \frac{\overline{Q_1P_1}}{\overline{Q_1P_2}}  \times  \frac{\overline{Q_2P_2}}{\overline{Q_2P_3}} \times \cdots \times
\frac{\overline{Q_5P_5}}{\overline{Q_5P_1}} = 1.$$
\end{theorem}

The overline $\overline{AB}$ means the algebraic measure of $AB$ with respect to an orientation of the line $(AB)$.
The ratio $\frac{\overline{Q_iP_i}}{\overline{Q_iP_{i+1}}}$ is negative if and only if $Q_i$ is in the \emph{segment}
$[P_i,P_{i+1}]$ (this is independent of the chosen orientation of the line).

As a corollary we get:
\begin{corollary}
A line cannot intersect an odd number of segments of a pentagon.
\end{corollary}
Otherwise the product of the five ratios would be negative, 
that contradicts the fact that this product equals $1$.

\bigskip

\textbf{Non-realization as a real line arrangement.} 

We will apply this to our configurations. 
Suppose that $5$ lines with \emph{real equations} are disposed as follows:
there are $5$ intersection points $P_1,\ldots, P_5$ outside the ball $B_r$
and $5$ intersection points inside the ball.

%

We denote the lines as follows $\ell_i = (P_{i-1}P_{i+1})$ and $Q_i = \ell_i \cap \ell_{i+1}$
for $i$ from $0,1,\ldots,4$ (counting modulo $5$).

\begin{figure}[H]
   \begin{tikzpicture}[scale=2]
      \coordinate (O) at (0,0);      

      \foreach \i in {0,1,...,4} {
        \coordinate (P\i) at (\i*72:1); 
      };
      \draw[thick] (P0)--(P2)--(P4)--(P1)--(P3)--cycle;       
      \draw[thin] circle (0.7); 

      \coordinate (Q0) at (intersection of P4--P1 and P0--P2);
      \coordinate (Q1) at (intersection of P0--P2 and P1--P3);
      \coordinate (Q2) at (intersection of P1--P3 and P2--P4);
      \coordinate (Q3) at (intersection of P2--P4 and P3--P0);
      \coordinate (Q4) at (intersection of P3--P0 and P4--P1);
      \foreach \i in {0,1,...,4} {
        \draw [red] plot [only marks,mark size=1, mark=*] (Q\i);
        \node at (Q\i)[left] {$Q_\i$};
        \draw [cyan] plot [only marks,mark size=1, mark=square*] (P\i);
        \node at (P\i)[left] {$P_\i$};
      };
   \end{tikzpicture} 
\end{figure}

The first remark is that the polygon $Q_0 Q_1 \cdots Q_4$ is convex otherwise
two lines $\ell_j$, $\ell_k$ would have an intersection point, distinct from the $Q_i$,
inside the convex hull of $Q_0 Q_1 \cdots Q_4$ and hence inside the ball $B_r$.

\medskip

Each line $\ell_i$ contains two points $Q_j$ and two points $P_k$.
Two kinds of configurations for points on lines are possible.
Type (A): $P_k - Q_j - Q_{j'} - P_{k'}$ (the ball separates the two $P$)
or type (B): $P_k - P_{k'} - Q_j - Q_{j'}$ (the two $P$ are on the same side of the ball).

\begin{figure}[H]
   \begin{tikzpicture}[scale=1.5]
      \draw[thick] (0,0)--(4,0);  
      \draw[thin] (2,0) circle (0.7); 

      \coordinate (P) at (1,0);
      \coordinate (PP) at (3.5,0);
      \coordinate (Q) at (1.8,0);
      \coordinate (QQ) at (2.4,0);

      \draw [cyan] plot [only marks,mark size=1, mark=square*] (P);
      \draw [cyan] plot [only marks,mark size=1, mark=square*] (PP);
      \draw [red] plot [only marks,mark size=1, mark=*] (Q);
      \draw [red] plot [only marks,mark size=1, mark=*] (QQ);

      \node at (P)[below] {$P_{k}$};
      \node at (PP)[below] {$P_{k'}$};
      \node at (Q)[below] {$Q_{j}$};
      \node at (QQ)[below] {$Q_{j'}$};

      \node at (4.2,0)[right] {Type (A)};
   \end{tikzpicture} 
\end{figure}
\begin{figure}[H]
   \begin{tikzpicture}[scale=1.5]
      \draw[thick] (0,0)--(4,0);  
      \draw[thin] (3.1,0) circle (0.7); 

      \coordinate (P) at (1,0);
      \coordinate (PP) at (2,0);
      \coordinate (Q) at (2.9,0);
      \coordinate (QQ) at (3.5,0);

      \draw [cyan] plot [only marks,mark size=1, mark=square*] (P);
      \draw [cyan] plot [only marks,mark size=1, mark=square*] (PP);
      \draw [red] plot [only marks,mark size=1, mark=*] (Q);
      \draw [red] plot [only marks,mark size=1, mark=*] (QQ);

      \node at (P)[below] {$P_{k}$};
      \node at (PP)[below] {$P_{k'}$};
      \node at (Q)[below] {$Q_{j}$};
      \node at (QQ)[below] {$Q_{j'}$};

      \node at (4.2,0)[right] {Type (B)};
   \end{tikzpicture} 
\end{figure}

We now prove that configurations of type (B) are associated by pairs.
Suppose for instance that we have the following configuration of type (B)
for the line $\ell_2$ where $P_1$ is farest point of the ball. 
Suppose now that the other line $\ell_0$ that contains $P_1$ is of type (A).

\begin{figure}[H]
   \begin{tikzpicture}[scale=1.5]

      \draw[thin] (3.1,1.0) circle (1); 
 
    \begin{scope}[rotate=5]
      \draw[thick] (0,0)--(5,0) node[right] {$\ell_2$};  

      \coordinate (P) at (1,0);
      \coordinate (PP) at (2,0);
      \coordinate (Q) at (2.9,0);
      \coordinate (QQ) at (3.5,0);

      \draw [cyan] plot [only marks,mark size=1, mark=square*] (P);
      \draw [cyan] plot [only marks,mark size=1, mark=square*] (PP);
      \draw [red] plot [only marks,mark size=1, mark=*] (Q);
      \draw [red] plot [only marks,mark size=1, mark=*] (QQ);

      \node at (P)[below] {$P_1$};
      \node at (PP)[below] {$P_3$};
      \node at (Q)[below] {$Q_1$};
      \node at (QQ)[below] {$Q_2$};      
    \end{scope}

    \begin{scope}[rotate=30,yshift=-12]
      \draw[thick] (0,0)--(5,0) node[right] {$\ell_0$};        
      \coordinate (PP) at (4.5,0);
      \coordinate (Q) at (2.5,0);
      \coordinate (QQ) at (3.4,0);

      \draw [cyan] plot [only marks,mark size=1, mark=square*] (PP);
      \draw [red] plot [only marks,mark size=1, mark=*] (Q);
      \draw [red] plot [only marks,mark size=1, mark=*] (QQ);

      \node at (PP)[below] {$P_4$};
      \node at (Q)[below] {$Q_0$};
      \node at (QQ)[below] {$Q_4$};
    \end{scope}

      \draw[dashed] (0.85,-1)--(4.8,3)  node[left] {$\ell_4$};
      \draw[dashed] (1.45,2)--(3.95,-1)  node[right] {$\ell_1$};
   \end{tikzpicture} 
\end{figure}

As $P_3 \in [P_1Q_1]$, the line $\ell_4 = (P_3Q_4)$ intersects $\ell_1 = (Q_0Q_1)$ in $[Q_0Q_1]$.
Then $P_0 = \ell_1 \cap \ell_4$ is inside the ball, that gives a contradiction.
(The same phenomenon arise if we exchange the role of $Q_1$ and $Q_2$ on the line $\ell_2$.)
As a conclusion: there is an even number of type (B) line configurations.

\medskip

Suppose now that there exists an additionnal line $L$ that intersects our five lines $\ell_i$ inside the ball
(in order to realize the graph $G_6$).
Consider the pentagon $P_0 P_1 \ldots P_4$ and its $5$ segments:
\begin{itemize}
  \item For a segment $[P_iP_{i+2}]$ supported by a line of type (A), $L$ intersects $(P_iP_{i+2})$
in the ball, hence in the segment $[P_iP_{i+2}]$.

  \item For a segment $[P_iP_{i+2}]$ supported by a line of type (B), $L$ intersects $(P_iP_{i+2})$
in the ball, hence outside the segment $[P_iP_{i+2}]$.
\end{itemize}

As there is an even number of segment supported by lines of type (B) among the $5$ segments, there is 
odd number of segments supported by a line of type (A), hence the line $L$ intersects
an odd number of segments of the pentagon. By the Menelaus theorem it provides a contradiction.
Then any line $L$ with real equation cannot intersect the $5$ lines inside the ball.
It is quite surprising that is the realm of complex number this is possible.

\end{proof}


\bigskip
\bigskip

\part{Arrangements and numerical experiments}

We study in details a variation of a problem of realisability
of intermediate links of real line arrangements. We get two problems: the realisability
and maximization of the radius. After replacing spheres
by bands, we transcript the first problem into linear inequalities.
For some examples we deduce (exact) lower bounds and (numerical) upper bounds
for the maximum radius. We end by proving that this maximum radius is an algebraic number.

\section{Statement of the problem}
\label{sec:pb}

In this part we focus on the following geometric problem, dealing with lines in the real plane.
Let $\band_r = \big\{ (x,y) \in \Rr^2 \mid |x| \le r \big\}$ be the vertical band of radius $r$.
Fix some $R\ge1$.
Given two lines, we will consider two conditions: the two lines have their intersection in $\band_1$  (the band of radius $1$);
they do not have their intersection in $\band_R$ (the band of radius $R$).

More precisely: fix $n$ and fix a graph $G$ with $n$ vertices.
The problem is to find a set of $n$ distinct lines $\{ \ell_i \}$
such that for each pair $(i,j)$ (with $i<j$): 
if an edge of $G$ connect the vertex $i$ to $j$ then $\ell_i \cap \ell_j \in \band_1$
and if no edge connect the vertex $i$ to $j$ then $\ell_i \cap \ell_j \notin \band_R$.

For a given graph $G$ and a given $R$ the first question is: is such a configuration of lines exists?
If it exists for some $R$, what is the maximal $R$ that we can choose?

\section{Linear programs}

We denote by $(y=a_ix+c_i)$ an equation of $\ell_i$.
The abscissa of the intersection $\ell_i \cap \ell_j$ is $x_{ij}= - \frac{c_i-c_j}{a_i-a_j}$.
The condition $\ell_i \cap \ell_j \in \band_1$ becomes
\begin{equation}
\label{eq:in}
 |a_i-a_j| \ge |c_i-c_j|.
\tag{$E_{i,j}^\in$}
\end{equation}
while the condition $\ell_i \cap \ell_j \notin \band_R$ becomes
\begin{equation}
\label{eq:notin}
R|a_i-a_j| < |c_i-c_j|.
\tag{$E_{i,j}^{\notin}$}
\end{equation}

These conditions can be seen as linear inequalities, after discussion 
on cases depending on the sign of $a_i-a_j$ and $c_i-c_j$ (see below, paragraph \ref{sec:impl}).

\section{Numerical results}
\label{sec:num}

We will give some examples and results for several graphs.
For the graph $G=A_4$ we conjecture numerically that $R_\mymax(A_4) = 3+2\sqrt{2}$.
More precisely: we found a configuration of lines realizable for $R=3+2\sqrt{2}$
and we numerically compute that no such configuration exists for $R=3+2\sqrt{2}+\epsilon$ 
with $\epsilon=10^{-6}$.

\begin{figure}[H]
   \begin{tikzpicture}
      \foreach \i in {0,1,...,3} {
         \fill (\i,0) circle (2pt);
      } 
      \foreach \i in {0,1,...,2} {
        \draw (\i,0)--({\i+1},0);
      };
      \node at (-1,0) {$A_4$};
   \end{tikzpicture}  
\end{figure}

\begin{figure}[H]
  \begin{tikzpicture}[scale=0.5,>=latex]
     \draw[->] (-8,0) -- (8,0);
     \draw[->] (0,-13) -- (0,10.5);

     \draw[gray] (1,-13)--(1,9) node[above right] {$x=+1$};
     \draw[gray] (-1,-13)--(-1,9) node[above left] {$x=-1$};

     \draw[dashed] (-5.83,-13)--(-5.83,10) node[above] {$x=-(3+2\sqrt2)$};
     \draw[dashed] (5.83,-13)--(5.83,10) node[above] {$x=3+2\sqrt2$};

     \def\rr{3+2*sqrt(2)};
     \draw[very thick] (-6.83,0)--(6.83,0);
     \draw[very thick] (-6.83,-13.36)--(6.83,9.95);
     \draw[very thick] (-6.83,0.71)--(6.83,-8.95);
     \draw[very thick] (-6.83,-12.65)--(6.83,1);

     \fill[red] (1,0) circle (6pt);
     \fill[red] (-1,-3.41) circle (6pt); 
     \fill[red] (1,-4.83) circle (6pt);

     \draw [cyan] plot [only marks,mark size=5, mark=square*] (-5.85,0);
     \draw [cyan] plot [only marks,mark size=5, mark=square*] (5.85,0);
     \draw [cyan] plot [only marks,mark size=5, mark=square*] (-5.85,-11.65);
  \end{tikzpicture}
\end{figure}

\bigskip

Here are the graph, lines and equations. 
The red dots are the intersections whose abscissa verifiy $|x| \le 1$ (here all $|x|=1$), 
the blue squares are the intersections whose abscissa have maximal $|x|\ge R$, 
for this example $|x| \ge R_\mymax=3+2\sqrt{2}$ (here all $|x|=3+2\sqrt{2}$).

\bigskip

\begin{align*}
  (\ell_1) \quad &   y=0 \\
  (\ell_2) \quad &   y=(1+\frac{\sqrt2}{2})x-1-\frac{\sqrt2}{2} \\
  (\ell_3) \quad &   y= -\frac{\sqrt2}{2}x-2-3\frac{\sqrt2}{2}\\
  (\ell_4) \quad &   y=x-3-2\sqrt2 \\
\end{align*}

\bigskip

For the graph $G=A_5$ we conjecture numerically that $R_\mymax(A_5) = 2+\sqrt3$. 

\begin{minipage}{0.4\textwidth}
  \begin{tikzpicture}
      \foreach \i in {0,1,...,4} {
         \fill (\i,0) circle (2pt);
      } 
      \foreach \i in {0,1,...,3} {
        \draw (\i,0)--({\i+1},0);
      };
      \node at (-1,0) {$A_5$};
   \end{tikzpicture}  
\end{minipage}
\begin{minipage}{0.59\textwidth}
\begin{align*}
  (\ell_1) \quad &   y=0 \\
  (\ell_2) \quad &   y=3+\sqrt5 \\
  (\ell_3) \quad &   y=x \\
  (\ell_4) \quad &   y=x+2+\sqrt5 \\
  (\ell_5) \quad &   y= -\frac{\sqrt5-1}{2}x +\frac{3+\sqrt5}{2}\\
\end{align*}  
\end{minipage}

We also find
for $G=G_1$, $R_\mymax(G_1) =3$
and for $G=G_2$, $R_\mymax(G_2) =3+2\sqrt2$.
But for both theses graphs the bound is obtained by a sequence of configuration
that tends to a ``degenerate'' configuration with two lines that are equal.

\hfil{
  \begin{tikzpicture}
      \fill (0,0) circle (2pt);
      \fill (1,0) circle (2pt);
      \fill (2,0) circle (2pt);
      \fill (0,1) circle (2pt);
      \fill (1,1) circle (2pt);

      \draw (0,0)--(0,1)--(1,1)--(1,0);
      \draw(0,0)--(2,0);

      \node at (2,0.5) {$G_1$};
   \end{tikzpicture}
\qquad 
  \begin{tikzpicture}
      \fill (0,0) circle (2pt);
      \fill (1,0) circle (2pt);
      \fill (2,0) circle (2pt);
      \fill (3,0) circle (2pt);
      \fill (2.5,0.8) circle (2pt);

      \draw (2,0)--(2.5,0.8)--(3,0);
      \draw(0,0)--(3,0);

      \node at (0.5,0.5) {$G_2$};
   \end{tikzpicture}
\qquad 
  \begin{tikzpicture}
      \fill (0,0) circle (2pt);
      \fill (1,0) circle (2pt);
      \fill (2,0) circle (2pt);

      \fill (0.5,0.8) circle (2pt);
      \fill (1.5,0.8) circle (2pt);

      \draw(0,0)--(2,0);  
      \draw (0,0)--(0.5,0.8)--(1,0)--(1.5,0.8)--(2,0);

      \node at (2.5,0.5) {$G_3$};
   \end{tikzpicture}
}

For $G=G_3$, $R_\mymax(G_3)= \alpha = 2.60\ldots$, where $\alpha$
is an algebraic number of degree $3$, that is a root of $x^3+x^2-9x-1=0$.

\begin{question}
It would be interesting to know the value of $R_\mymax(A_n)$ (where $A_n$ is the line-graph with $n$ vertices). 
Conjecturally $R_\mymax(A_n) \to 3$
as $n\to+\infty$.  
\end{question}

\section{Implementation}
\label{sec:impl}

An algorithm has been implemented in \textsc{Matlab} to decide whether for a given graph $G$ and a given $R$
a corresponding configuration of lines exists. Moreover 
--if it exists-- it gives a numeric solution.

The first step is to separate the situation in several linear problems.
To each pair $(i,j)$ with $i<j$ we have $4$ possibilities for the two signs of
$a_i-a_j$ and $c_i-c_j$. The number of pairs being $\frac{n(n-1)}{2}$.
After reduction of the case by symmetry it yields $4^{n(n-1)/2-1}$ cases.

The second step is to study each case: for a fixed condition of sign for $a_i-a_j$ and $c_i-c_j$,
the condition (\ref{eq:in}) or the condition (\ref{eq:notin})
yields a linear problem that can be solved numerically by standard tools.

This algorithm enables to find numerically $R_\mymax(G)$, by testing several $R$.
Rigorously: it first gives a value $R_0$  such that $R_0-\epsilon \le R_\mymax(G) < R_0 +\epsilon$ where $\epsilon$
is a numerical value (say $\epsilon=10^{-6}$ in practise).

Then it is possible to conjecture a value $R_1$ and the coefficients of the limit configuration and then check
that this configuration works. We then rigorously have proved $R_1 \le R_\mymax(G) < R_1 +\epsilon$.

Due to the exponential growth of the number of cases, 
we were only able to deal examples with $4$ or $5$ lines.

\begin{question}
Find an algorithm for the feasibility of band and ball problems over $\Rr$ and $\Cc$ that is efficient up to $n=10$ lines.
If a graph $G$ is feasible then compute a configuration.
\end{question}

\begin{question}
Have a rigorous proof (other than numerical) for the upper bounds of $R_\mymax$.  
\end{question}

\section{The maximum radius is an algebraic number}

Consider the coefficients of the lines $(a_1,c_1,a_2,c_2,\ldots,a_n,c_n) \in \Rr^{2n}$
as parameters. For a given graph $G$, the condition (\ref{eq:in}) and the condition (\ref{eq:notin}) for $R=1$
define a semi-algebraic set $\mathcal{S} \subset \Rr^{2n}$.
First define a function $F_1 : \mathcal{S} \to \Rr^{\frac{n(n-1)}{2}}$ by
$(a_1,c_1,\ldots) \mapsto (x_{ij})_{i<j}$ where $x_{ij}= - \frac{c_i-c_j}{a_i-a_j}$.
Secondly define $F_2 : \Rr^{\frac{n(n-1)}{2}} \to \Rr$ by
$(x_{ij}) \mapsto  \min_{|x_{ij}|>1} |x_{ij}|$ (equivalently the minimum runs over 
the pairs $(i,j)$ such that no edge of $G$ goes from $i$ to $j$).

Let $F = F_2\circ F_1 : \mathcal{S} \to \Rr$. Then by definition $R_\mymax(G) = \sup_{\mathcal{S}} F$.
By general results in semi-algebraic geometry it implies:
\begin{proposition}
Fix a graph $G$. If $R_\mymax(G)$ exists and is finite then it is an algebraic number.   
\end{proposition}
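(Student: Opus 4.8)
The plan is to realize $R_{\mymax}(G)$ as the supremum of a semi-algebraic function over a semi-algebraic set and then invoke the Tarski--Seidenberg machinery, which guarantees that such suprema, when finite, are algebraic. Concretely, the set $\mathcal{S} \subset \Rr^{2n}$ cut out by the inequalities (\ref{eq:in}) and (\ref{eq:notin}) at $R=1$ is semi-algebraic, since each condition is a polynomial inequality in the coordinates $(a_i,c_i)$ after clearing denominators (the quantities $|a_i-a_j|$ and $|c_i-c_j|$ are semi-algebraic, being defined by finitely many polynomial sign conditions). The map $F_1$ is a rational map, hence semi-algebraic on the locus where all $a_i \ne a_j$; and $F_2$, being a minimum of absolute values subject to the constraint $|x_{ij}|>1$, is likewise semi-algebraic. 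The composite $F = F_2 \circ F_1$ is therefore a semi-algebraic function $\mathcal{S} \to \Rr$.

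First I would recall the core structural fact: the graph of a semi-algebraic function is a semi-algebraic subset of $\Rr^{2n}\times\Rr$, and by Tarski--Seidenberg its projection to the last coordinate, namely the image $F(\mathcal{S}) \subseteq \Rr$, is a semi-algebraic subset of $\Rr$. But a semi-algebraic subset of $\Rr$ is a finite union of points and open intervals with algebraic endpoints. The supremum $R_{\mymax}(G) = \sup_{\mathcal{S}} F$ is, assuming it is finite, the right endpoint of the rightmost such interval (or an isolated point), and every endpoint of a semi-algebraic subset of $\Rr$ is an algebraic number. This is the heart of the argument and is essentially immediate once the semi-algebraicity of $F$ is established.

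The step requiring the most care — and the main obstacle — is verifying that $F$ is genuinely semi-algebraic despite the absolute values and the conditional minimum in $F_2$. I would handle this by the standard device of stratifying according to sign patterns: on each of the finitely many cells where the signs of all $a_i-a_j$ and $c_i-c_j$ are fixed, the absolute values become linear, and on each cell where the subset $\{(i,j) : |x_{ij}|>1\}$ is fixed, the minimum is a minimum over a fixed finite index set, hence semi-algebraic. Since there are only finitely many such sign patterns and index subsets, and semi-algebraic sets are closed under finite unions and intersections, $F$ is semi-algebraic on $\mathcal{S}$. One subtlety to flag is the behavior at the boundary where $a_i=a_j$ (parallel lines, $x_{ij}$ undefined) or where $|x_{ij}|=1$; these loci are lower-dimensional and semi-algebraic, so they do not affect the conclusion, but the argument should note that $F$ may be taken as defined on the open locus where the relevant denominators are nonzero.

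I would conclude by remarking that the hypothesis ``$R_{\mymax}(G)$ exists and is finite'' is exactly what allows us to extract a genuine algebraic endpoint: without finiteness the supremum could be $+\infty$, which is not an algebraic number, and indeed the examples $G_1$ and $G_2$ in Section~\ref{sec:num} show that the supremum may fail to be attained (it is approached by degenerate configurations), so one should be careful to claim algebraicity of the value $R_{\mymax}(G)$ as a number rather than its attainment by an actual configuration.
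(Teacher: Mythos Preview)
Your proposal is correct and follows exactly the approach the paper itself takes: set up the feasible configurations as a semi-algebraic set $\mathcal{S}$, observe that $F=F_2\circ F_1$ is a semi-algebraic function, and conclude via the Tarski--Seidenberg theorem that $\sup_{\mathcal{S}} F$, if finite, is algebraic. In fact you have supplied considerably more detail than the paper does --- the paper simply states the setup and then writes ``By general results in semi-algebraic geometry it implies'' the proposition, whereas you have spelled out the stratification by sign patterns, the structure of semi-algebraic subsets of $\Rr$, and the boundary subtleties.
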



\bigskip
\bigskip

\part{Combinatorics}

In this last part we will compare two problems of realisability
and end with questions.

\section{Two real problems}

We consider two real problems. Firstly the problem already considered in section \ref{sec:pb},
dealing with the realisability of a graph as the configuration of lines within two bands.
We define a similar problem for balls, by replacing a band $\band_r$ by
the ball $\ball_r = \big\{ (x,y)\in \Rr^2 \mid x^2 + y^2 \le r^2 \big\}$.

The questions are the same. Given a graph $G$ with $n$ vertices and a real number $R\ge 1$, 
find a set of $n$ distinct lines $\{ \ell_i \}$
such that for each pair $(i,j)$ (with $i<j$): 
if an edge of $G$ connect the vertex $i$ to $j$ then $\ell_i \cap \ell_j \in \ball_1$
and if no edge connect the vertex $i$ to $j$ then $\ell_i \cap \ell_j \notin \ball_R$.

For a given graph $G$ and a given $R$ the questions are: is such a configuration of lines exists?
If it exists for some $R$, what is the maximal $R$ that we can choose?
We will compare the two problems from the combinatorial point of view.

\section{From bands to spheres}

\begin{lemma}
\label{lem:feas}
If $G$ is feasible for the bands $(\band_1,\band_R)$ (in $\Rr^2$)
then $G$ is feasible for the balls $(\ball_1,\ball_{R'})$ (in $\Rr^2$)
with $R'=R(1-\epsilon)$ (for all $\epsilon > 0$).

In particular the maximal radius $R_\mymax$ for the band problem is less or equal than the maximal radius $R_\mymax'$
for the ball problem.
\end{lemma}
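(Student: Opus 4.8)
The plan is to start from a feasible band configuration and deform it by a single diagonal linear map that contracts the plane horizontally by a factor close to $1$ and vertically by a factor as small as we wish. Suppose the lines $\{\ell_i\}$, with $\ell_i = (y = a_i x + c_i)$, realize $G$ for the bands $(\band_1,\band_R)$. The only feature of an intersection point $\ell_i \cap \ell_j$ that enters the band conditions is its abscissa $x_{ij} = -\frac{c_i-c_j}{a_i-a_j}$: an edge forces $|x_{ij}|\le 1$ and a non-edge forces $|x_{ij}| > R$. The difficulty in reusing this configuration for balls is purely vertical, as every non-edge point already satisfies $x_{ij}^2 + y_{ij}^2 > R^2$ and thus causes no trouble, whereas an edge point with $|x_{ij}|\le 1$ may have an arbitrarily large ordinate $y_{ij}$ and so lie far outside $\ball_1$.

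First I would apply the linear map $T_{\mu,\lambda}\colon (x,y)\mapsto(\mu x,\lambda y)$ with $\mu,\lambda\in(0,1]$. Being invertible and linear it carries lines to lines bijectively, so the arrangement $\{T_{\mu,\lambda}(\ell_i)\}$ has the same intersection graph and the image of $\ell_i\cap\ell_j$ is exactly $(\mu x_{ij},\lambda y_{ij})$. For the non-edges I would fix $\mu = 1-\epsilon$: then the image point has squared norm at least $\mu^2 x_{ij}^2 > \mu^2 R^2 = (1-\epsilon)^2 R^2$, i.e. norm $> R(1-\epsilon)$, so it lies strictly outside $\ball_{R'}$ with $R' = R(1-\epsilon)$, as required.

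For the edges I would then choose $\lambda$ small. Let $Y = \max |y_{ij}|$ over the finitely many edge pairs. An edge point has squared norm $\mu^2 x_{ij}^2 + \lambda^2 y_{ij}^2 \le \mu^2 + \lambda^2 Y^2$, so taking $\lambda \le \sqrt{1-\mu^2}/Y$ (any $\lambda$ if $Y=0$) bounds this by $1$ and places the point in $\ball_1$. This is where the main obstacle sits: an edge point on the boundary $|x_{ij}|=1$ of the band cannot be pushed into the closed unit ball by a vertical contraction alone, since its abscissa already saturates the constraint. Contracting horizontally by $\mu<1$ is precisely what creates the slack $1-\mu^2>0$ needed to absorb the vertical extent, and this is the sole reason the statement loses the factor $(1-\epsilon)$ rather than holding with $R'=R$.

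Finally I would deduce the inequality on the maximal radii. Band feasibility is monotone in $R$ (a non-edge point with $|x_{ij}|>R$ also satisfies $|x_{ij}|>R''$ for $R''\le R$), so for every $R < R_\mymax$ the graph is band-feasible for $(\band_1,\band_R)$; the construction above then yields ball feasibility for $(\ball_1,\ball_{R(1-\epsilon)})$ for all $\epsilon>0$, whence $R(1-\epsilon)\le R_\mymax'$. Letting $\epsilon\to 0$ gives $R\le R_\mymax'$, and since $R<R_\mymax$ was arbitrary this establishes $R_\mymax \le R_\mymax'$.
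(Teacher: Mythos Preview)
Your proof is correct and follows essentially the same approach as the paper: the paper first applies the vertical contraction $(x,y)\mapsto(x,\lambda y)$ to flatten the configuration and then a homothety $(x,y)\mapsto(1-\epsilon)(x,y)$, whose composition is exactly your single diagonal map $T_{\mu,\lambda'}$ with $\mu=1-\epsilon$. Your version is in fact more explicit, giving the quantitative bound $\lambda\le\sqrt{1-\mu^2}/Y$ and spelling out the passage to $R_\mymax\le R_\mymax'$, which the paper leaves implicit.
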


\begin{remark}
In general the reciprocal is false.
For example let $G=C_5$. This graph is feasible for the ball problem $(\ball_1,\ball_{R})$
for some $R\ge 1$ but not feasible for the band problem $(\band_1,\band_{R})$
for any $R \ge 1$.

\begin{center}
   \begin{tikzpicture}[scale=1.2]
      \coordinate (O) at (0,0);      
      \foreach \i in {0,1,...,4} {
        \coordinate (A\i) at (\i*72:1); 
        \fill (A\i) circle (2pt);  
      };
      \draw (A0)--(A1)--(A2)--(A3)--(A4)--cycle;
      \node at (A2)[left] {$C_5$};
   \end{tikzpicture}\qquad\qquad
   \begin{tikzpicture}[scale=2]
      \coordinate (O) at (0,0);      

      \foreach \i in {0,1,...,4} {
        \coordinate (P\i) at (\i*72:1); 
      };
      \draw[thick,red] (P0)--(P2)--(P4)--(P1)--(P3)--cycle;
      \draw (O) circle (0.4);
      \draw (O) circle (0.8); 
      \node at (0,0.6)[left] {$\ball_1$};
      \node at (30:0.8)[right] {$\ball_R$};       
   \end{tikzpicture}   
\end{center}

Drawing a $5$-star proves the feasibility for balls.
To prove that $G$ is not feasible for bands, a first step is to remark that 
the five points of intersection in $\band_1$ draw a convex pentagon (otherwise there would be a sixth point of intersection inside
$\band_1$). The second step is to notice that for the $5$ points of intersection not in $\band_1$, at least 
$3$ of them are on the same side. So that, among this $3$ points, you can choose $Q_1$ and $Q_2$ that are on a same line of the configuration.
On this line two intersection points of the configuration are in $\band_1$ but by convexity of the pentagon they
should also be in the segment $[Q_1,Q_2]$ which is entirely out of $\band_1$. It yields a contradiction.
\end{remark}

\begin{proof}[Proof of lemma \ref{lem:feas}]
Fix $0 <\epsilon \ll 1$.
Suppose that a configuration of lines $\mathcal{L}$ realizes a graph $G$ for the band problem  $(\band_1,\band_R)$.
The transformation $(x,y) \mapsto (x,\lambda y)$ preserves equations (\ref{eq:in}) and (\ref{eq:notin}). 
So that by choosing a sufficiently small 
$0<\lambda \ll 1$ we get a ``flat'' configuration of lines $\mathcal{L}'$.
On the picture below the original configuration is on the left, the flattened one on the right.

\begin{center}
   \begin{tikzpicture}[scale=0.5]
      \draw (-1,-5)--(-1,5);
      \draw (+1,-5)--(+1,5);
      \draw (-2.3,-5)--(-2.3,5);
      \draw (+2.3,-5)--(+2.3,5);
      \node at (1,-5)[left] {$\band_1$};
      \node at (2.3,-5)[right] {$\band_R$};    
      \draw[thick,red] (-5,0)--(5,0);
      \draw[thick,red] (-3,5)--(5,-5);
      \draw[thick,red] (-5,-1)--(5,4);
      \node at (4,2) {$\mathcal{L}$};
   \end{tikzpicture}\qquad
   \begin{tikzpicture}[scale=0.5]
      \draw (-1,-5)--(-1,5);
      \draw (+1,-5)--(+1,5);
      \draw (-2.3,-5)--(-2.3,5);
      \draw (+2.3,-5)--(+2.3,5);
      \node at (1,-5)[left] {$\band_1$};
      \node at (2.3,-5)[right] {$\band_R$};  
      \begin{scope}[yscale=0.25]
      \draw[thick,red] (-5,0)--(5,0);
      \draw[thick,red] (-3,5)--(5,-5);
      \draw[thick,red] (-5,-1)--(5,4);
      \end{scope}
      \node at (4,2) {$\mathcal{L}'$};  
      \draw (O) circle (1);
      \draw (O) circle (2.3); 
      \node at (0,1)[above] {$\ball_1$};
      \node at (0,-2.3)[below] {$\ball_R$};      
   \end{tikzpicture}
\end{center}

Now let  $h: (x,y) \mapsto (1-\epsilon) \cdot (x,y)$ be the homothety centred at the origin of ratio $1-\epsilon$. 
Let $P$ be a point of intersection of two lines of $\mathcal{L}'$.
Due to the flatness if $P \in \band_1$ then $h(P) \in \ball_1$ and
if $P \notin \band_R$ then $P \notin \ball_{R(1-\epsilon)}$.
So that the configuration $\mathcal{L}'$ proves the feasibility for the problem $(\ball_1,\ball_{R(1-\epsilon)})$.
\end{proof}

\section{From spheres to bands}

\begin{lemma}
Let $G$ be a graph.
If $G$ is feasible for the balls $(\ball_1,\ball_R)$ for all $R\gg1$,
then $G$ is feasible for the bands $(\band_1,\band_R)$ for all $R\gg1$.
\end{lemma}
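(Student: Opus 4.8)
The plan is to exploit three features of the two problems: the ball conditions are invariant under rotations of $\Rr^2$ about the origin; we have $\ball_1 \subset \band_1$, so any intersection lying in $\ball_1$ automatically lies in $\band_1$; and the band conditions depend only on the $x$-coordinates of the intersection points. So I would start from a ball-feasible configuration for some large radius $R$ and rotate it cleverly, pushing the ``far away'' non-edge intersections (which lie outside $\ball_R$) away from the $y$-axis so that they acquire a large $|x|$-coordinate, while the edge intersections, being of norm $\le 1$, cannot leave $\band_1$ under any rotation.

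Concretely, fix a large $R$ and a configuration $\mathcal{M}$ realizing $G$ for $(\ball_1,\ball_R)$. For each non-edge pair $(i,j)$ write the far intersection point in polar form $p_{ij} = |p_{ij}|(\cos\alpha_{ij},\sin\alpha_{ij})$ with $|p_{ij}| > R$. Rotating $\mathcal{M}$ by an angle $\phi$ about the origin sends lines to lines and $p_{ij}$ to the rotated point, whose $x$-coordinate is $|p_{ij}|\cos(\alpha_{ij}+\phi)$, of absolute value $|p_{ij}|\,|\cos(\alpha_{ij}+\phi)|$. I would then choose $\phi$ so that $|\cos(\alpha_{ij}+\phi)|$ is bounded below uniformly over all non-edge pairs. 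There are at most $\binom{n}{2}$ forbidden values of $\phi$ modulo $\pi$ (those making some $\alpha_{ij}+\phi \equiv \pi/2$), and they cut the circle $\Rr/\pi\Zz$ into at most $\binom{n}{2}$ arcs; placing $\phi$ at the midpoint of the longest arc keeps it at angular distance at least $\eta_0 := \pi/(2\binom{n}{2})$ from every forbidden value, whence $|\cos(\alpha_{ij}+\phi)| \ge \sin\eta_0 =: \delta > 0$. Crucially, $\delta$ depends only on $n$, not on $R$ nor on the particular configuration. A generic choice within the longest arc also avoids turning any $\ell_i$ into a vertical line, so the rotated configuration stays within the $(a_i,c_i)$ formalism; this is only a finite further constraint.

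With this rotation the edge intersections, having norm $\le 1$, still satisfy $|x| \le 1$, hence lie in $\band_1$; and each non-edge intersection satisfies $|x| \ge \delta|p_{ij}| > \delta R$, hence lies outside $\band_{\delta R}$. Thus the rotated configuration realizes $G$ for the bands $(\band_1,\band_{\delta R})$. Since the hypothesis furnishes a ball-feasible configuration for \emph{every} large $R$, and $\delta$ is a fixed constant, taking $R\to\infty$ yields band-feasibility for $(\band_1,\band_{R'})$ with $R' = \delta R$ arbitrarily large; equivalently, $G$ is feasible for the bands for all $R'\gg 1$.

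The routine verifications -- that rotation commutes with taking intersections of lines, and that $\ball_1\subset\band_1$ disposes of the edges -- are immediate. The one point deserving care, and the crux of the argument, is the uniform lower bound $\delta$ on the rotation margin: it must not degrade as $R$ grows, for otherwise the product $\delta R$ could fail to tend to infinity. This is exactly what the count of at most $\binom{n}{2}$ forbidden directions guarantees, producing a margin depending only on $n$. Note also that this is genuinely a converse to Lemma \ref{lem:feas} only under the stronger ``for all $R\gg1$'' hypothesis, reflecting the fact that balls are rotation-invariant whereas bands are not.
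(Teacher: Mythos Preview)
Your argument is correct and is in fact more rigorous than the paper's own proof, which is explicitly labeled as heuristic. The paper proceeds by analyzing the structure of a ball-feasible configuration for large $R$: it observes that within each connected component of the complement graph $\complement G$, any two lines must pass through the tiny ball $\ball_1$ yet meet outside the huge ball $\ball_R$, forcing them to be nearly parallel; the lines thus organize into bundles of almost-common direction, one per component of $\complement G$, and after rotating these directions close to horizontal the far intersections acquire large $|x|$-coordinate. Your approach bypasses the graph structure entirely: you simply list the at most $\binom{n}{2}$ rotation angles that would send some far intersection onto the $y$-axis, and a pigeonhole argument on $\Rr/\pi\Zz$ produces a $\phi$ at angular distance at least $\pi/\big(2\binom{n}{2}\big)$ from all of them, giving the uniform margin $\delta = \sin\big(\pi/(2\binom{n}{2})\big)$. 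This is shorter, fully rigorous, and yields the explicit quantitative transfer $R' = \delta(n)\cdot R$, whereas the paper's sketch leaves the multi-bundle case somewhat impressionistic. One tiny omission in your write-up: a non-edge pair of \emph{parallel} lines has no finite intersection $p_{ij}$ and hence contributes no forbidden angle; such a pair is trivially outside every band, so this does not affect your count or your conclusion.
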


\begin{proof}
We give a heuristic proof, and start with the case where $\complement G$ is a connected graph.
Then for a big $R$, all lines of the configurations are nearly equal (see picture below): firstly any line of the configuration should pass through 
the ball $\ball_1$. Pick a line $L_0$; any other line $L$ connected to $L_0$ in $\complement G$ should pass through $\ball_1$ that is very small compare to 
$\ball_R$, so that we think of $\ball_1$ as (nearly) a point. $L$ should also intersect $L_0$ outside $\ball_R$ at $Q$
(because $\complement G$ is a connected). So the two ``points'' of intersection $\ball_1$ and $Q$ define (nearly) the same 
line $L$ and $L_0$. Because $\complement G $ is supposed to be a connected set, it proves that all lines are (nearly) equal.

\begin{center}
   \begin{tikzpicture}[scale=0.3]
      \draw (-1,-8)--(-1,8);
      \draw (+1,-8)--(+1,8);
      \draw (-8,-8)--(-8,8);
      \draw (+8,-8)--(+8,8);
      \node at (1,-8)[below left] {$\band_1$};
      \node at (8,-8)[below right] {$\band_R$};  

      \draw[thick,red] (-12,0.7)--(10,0.7);
      \draw[thick,red] (-12,0.6)--(10,-1.5);
      \draw[thick,red] (-12,0.2)--(10,1.1);

      \draw (O) circle (1);
      \draw (O) circle (8); 
      \node at (0,1)[above] {$\ball_1$};
      \node at (10,0)[below right] {$\ball_R$};      
   \end{tikzpicture}
\end{center}

We may have supposed that $L_0$ was an horizontal line, then replacing $\ball_1$ by $\band_1$
and  $\ball_R$ by $\band_R$ proves the feasibility.

If  $\complement G$ is no longer connected, then each connected component of $\complement G$ yields a bundle of lines 
with (nearly) the same direction, any two bundles intersecting each other only in $\ball_1$. 
After choosing all directions sufficiently horizontal and replacing balls by bands, it gives the conclusion.

\begin{center}
   \begin{tikzpicture}[scale=0.3]
      \draw (-1,-8)--(-1,8);
      \draw (+1,-8)--(+1,8);
      \draw (-8,-8)--(-8,8);
      \draw (+8,-8)--(+8,8);
      \node at (1,-8)[below left] {$\band_1$};
      \node at (8,-8)[below right] {$\band_R$};  
      \begin{scope}[rotate=-10,yscale=0.5]
      \draw[thick,red] (-12,0.7)--(10,0.7);
      \draw[thick,red] (-12,0.6)--(10,-1.5);
      \draw[thick,red] (-12,0.2)--(10,1.1);
      \end{scope}      
      \begin{scope}[rotate=20,yscale=0.3]
      \draw[thick,blue] (-12,0.6)--(10,-1.5);
      \draw[thick,blue] (-12,0.2)--(10,1.1);
      \end{scope}

      \draw (O) circle (1);
      \draw (O) circle (8); 
      \node at (0,1)[above] {$\ball_1$};
      \node at (10,0)[below right] {$\ball_R$};      
   \end{tikzpicture}
\end{center}

\end{proof}

\begin{question}
For each class of problem (over $\Rr$ or $\Cc$) characterize feasible graphs.  
\end{question}

\begin{question}
More specifically for the complex problem with spheres, each component of a link of arrangement is in fact a true circle
(a circle in the Euclidean meaning).
For instance it is known that a Borromean ring cannot be obtained with true circles.
See \cite[Lemma 3.2]{FS} and \cite{LZ}.

 The following questions seem to be open:
\begin{itemize}
  \item Let two links of arrangements $L_1$ and $L_2$ with the same dual graph $G_1=G_2$. Does it imply $L_1$ isotopic to $L_2$?

  \item Given a graph $G$, is it possible to construct a link $L$ in $S^3$ whose components are true circles and whose dual graph is $G$?

  \item Given a link  $L$ in $S^3$ whose components are true circles, is it possible to realize $L$ as the link of a line arrangement?
\end{itemize}
\end{question}


\bigskip
\bigskip

\end{document}